\documentclass[12pt,a4paper]{article}
\usepackage{amsmath,amssymb,amsthm}
\usepackage{amssymb,latexsym, bbm,comment}
\usepackage{graphicx}
\usepackage{makeidx}
\newtheorem{theorem}{Theorem}[section]
\newtheorem{prop}[theorem]{Proposition}
\numberwithin{equation}{section}

\newcommand{\R}{\mathbb{R}}
\newcommand{\Rd}{\mathbb{R}^d}
\newcommand{\Rm}{\mathbb{R}^m}

\newcommand{\C}{\mathbb{C}}
\newcommand{\E}{\mathbb{E}}

\newcommand{\ds}{\displaystyle}
\newcommand{\g}{\mathfrak{g}}

\newcommand{\p}{\mathfrak{p}}
\newcommand{\fk}{\mathfrak{k}}

\newcommand{\bean}{\begin{eqnarray*}}
\newcommand{\eean}{\end{eqnarray*}}

\newcommand{\la}{\langle}
\newcommand{\ra}{\rangle}

\newcommand{\Z}{\mathbb{Z}}

\newcommand{\G}{\widehat{G}}

\newcommand{\Ad}{\rm{Ad}}

\newcommand{\limn}{\lim_{n \rightarrow \infty}}

\begin{document}

\date{}

\title{The Positive Maximum Principle on Symmetric Spaces}

\author{David Applebaum, Trang Le Ngan\\ School of Mathematics and Statistics,\\ University of
Sheffield,\\ Hicks Building, Hounsfield Road,\\ Sheffield,
England, S3 7RH\\ ~~~~~~~\\e-mail: D.Applebaum@sheffield.ac.uk, tlengan1@sheffield.ac.uk}

\maketitle

\begin{abstract} We investigate the Courr\`{e}ge theorem in the context of linear operators that satisfy the positive maximum principle on a space of continuous functions over a symmetric space. Applications are given to Feller--Markov processes. We also introduce Gangolli operators, which satisfy the positive maximum principle, and generalise the form associated with the generator of a L\'{e}vy process on a symmetric space. When the space is compact, we show that Gangolli operators are pseudo--differential operators having scalar symbols.
\end{abstract}

\section{Introduction}

Consider a linear operator $A$ defined on the space $C_{c}^{\infty}(\Rd)$ of smooth functions of compact support. If it satisfies the positive maximum principle (PMP), then by a classical theorem of Courr\`{e}ge \cite{Courr} $A$ has a canonical form as the sum of a second--order elliptic differential operator and a non--local integral operator. Furthermore $A$ may also be written as a pseudo--differential operator whose symbol is of L\'{e}vy--Khintchine type (but with variable coefficients). This result is of particular importance for the study of Feller--Markov processes in stochastic analysis. The infinitesimal generator of such a process always satisfies the PMP, and so has the canonical form just indicated. The use of the symbol to investigate probabilistic properties of the process has been an important theme of much recent work in this area (see e.g. \cite{Jac4}, \cite{BSW} and references therein).

In a recent paper \cite{AT1}, the authors generalised the Courr\`{e}ge theorem to linear operators satisfying the PMP on a Lie group $G$. The key step was to replace the set of first order partial derivatives $\{\partial_{1}, \ldots, \partial_{d}\}$ by a basis $\{X_{1}, \ldots, X_{d}\}$ for the Lie algebra $\g$ of $G$. In this case, when $G$ is compact, we find that the operator is a pseudo--differential operator in the sense of Ruzhansky and Turunen \cite{RT}, with matrix--valued symbols, obtained using Peter--Weyl theory.

In the current paper, we extend the Courr\`{e}ge theorem to symmetric spaces $M$. Since any such space is a homogeneous space $G/K$, where $K$ is a compact subgroup of the Lie group $G$, we may conjecture that the required result can be obtained from that of \cite{AT1} by use of projection techniques; however, this is not the case as a linear operator that satisfies the PMP on $C_{c}^{\infty}(G/K)$ may not satisfy it on $C_{c}^{\infty}(G)$. Nonetheless, a straightforward variation on the proof given in \cite{AT1} does enable us to derive the required result. As we might expect, the symmetries that are brought into play by the subgroup $K$, imposes constraints on the coefficients of the operator $A$.

The key probabilistic application of our result is that we obtain a canonical form for the generators of Feller processes on symmetric spaces. The case where the transition probabilities of $M$--valued Markov processes are $G$--invariant is the topic of a recent research monograph \cite{LiaoN}. There has been considerable interest in the case where such a process arises as the projection of a $K$--bi--invariant L\'{e}vy process in $G$ as in this case there is an analogue of the L\'{e}vy--Khintchine formula (due to Gangolli \cite{Gang1}). For recent work in this area, see e.g. \cite{AT0} and references therein.

When we come to study pseudo--differential operators, we again assume that $G$ (and hence $M$) is compact. We study a class of linear operators, which we call {\it Gangolli operators} (in recognition of the important contributions of Ramesh Gangolli \cite{Gang1} to this field). These satisfy the PMP, and their structure generalises those obtained from the $K$--bi--invariant L\'{e}vy processes. They all have a scalar--valued symbol which is defined using the spherical transform, rather than the full Fourier transform on the group as in \cite{App5}.

Many of the results of this paper first appeared in the PhD thesis \cite{Lili}, but our approach here is a little different.

\vspace{5pt}

{\bf Notation.} Throughout this paper, $G$ is a Lie group having neutral element $e$, dimension $d$ and Lie algebra $\g$. The exponential map from $\g$ to $G$ will be denoted as $\exp$. The Borel $\sigma$--algebra of $G$ is denoted as ${\mathcal B}(G)$. We denote by ${\mathcal F}(G)$, the linear space of all real--valued functions on $G$, $B_{b}(G)$ the Banach space (with respect to the supremum norm $||\cdot||_{\infty}$) of all bounded Borel measurable real--valued functions on $G$,  $C_{0}(G)$ the closed subspace of all continuous functions on $G$ that vanish at infinity, and $C_{c}^{\infty}(G)$ the dense linear manifold in $C_{0}(G)$ of smooth functions with compact support.

\section{Preliminaries on Lie Groups and Symmetric Spaces}

Let $M$ be a globally Riemannian symmetric space. Then (see Theorem 3.3 in \cite{Hel1} p.208)  there exists a Lie group $G$ and a compact subgroup $K$ of $G$ such that $M$ is diffeomorphic to the homogeneous space of left cosets $G/K$. As is standard procedure we will identify $M$ with $G/K$ henceforth, and write $\natural$ for the canonical continuous surjection from $G$ to $M$ which maps each $g \in G$ to the coset $gK$. We write $o = \natural(e)$. We have a natural identification between the space $C_{0}(M)$ and the closed subspace $C_{0}(G/K)$ of $C_{0}(G)$ comprising functions on $G$ that are right $K$--invariant.

At the Lie algebra level, we have the Cartan decomposition $\g = \fk \oplus \p$, where $\fk$ is the Lie algebra of $K$ and $d\natural_{e}$ is a linear isomorphism between $\p$ and $T_{o}(M)$ (see e.g. Theorem 3.3 in \cite{Hel1} pp.208--9). There is an Ad$(K)$--invariant inner product on $\g$ which corresponds to the Riemannian metric on $M$.  We will choose once and for all a basis $\{X_{1}, \ldots , X_{d}\}$ for $\g$ where $d =$ dim$(G)$, and order this so that $\{X_{1}, \ldots, X_{m}\}$ is a basis for $\p$, where $m =$ dim$(M)$.

We fix a system of canonical co--ordinates $(x_{1}, \ldots, x_{d})$  at $e$ which we extend to $G$ such that $x_{i} \in C_{c}^{\infty}(G)$ for $i = 1, \ldots, d$. Following \cite{LW} and \cite{LiaoN} pp. 77--8, we assume that for each $j = 1, \ldots , m, g \in G, k \in K, x_{j}(gk) = x_{j}(g)$ and

\begin{equation} \label{coAd}
\sum_{i=1}^{m}x_{i}(kg)X_{i} = \sum_{i=1}^{m}x_{i}(g)\Ad(k)X_{i}.
\end{equation}

Standard Lie group calculations establish the following for each $f \in C^{\infty}(G/K), g \in G, k \in K, X, Y \in \p$,
\begin{eqnarray} \label{Adcalc}
Xf(gk) & = & (\Ad(k)X)f(g) \nonumber \\
XYf(gk) & = & (\Ad(k)X)(\Ad(k)(Y)f(g),
\end{eqnarray}
and
\begin{eqnarray} \label{Adcalc1}
X(f \circ c_{k})(g) &  = & (\Ad(k)X)f(c_{k}(g)),\nonumber \\
XY(f \circ c_{k})(g) & = & (\Ad(k)X)(\Ad(k)Y)f(c_{k}(g)),
\end{eqnarray}
where $c_{k}(g) = kgk^{-1}$.

We also have that for all $X \in \fk, f \in C^{\infty}(G/K), g \in G$,
\begin{equation} \label{Kzero}
Xf(g) = 0.
\end{equation}

As for each $k \in K, \Ad(k)$ maps $\p$ linearly to $\p$, we can associate to it the $m \times m$ matrix $[\Ad(k)]$, with respect to the basis $X_{1}, \ldots, X_{m}$, in the usual way. We say that a vector $b \in \R^{m}$ is Ad$(K)$--invariant if $b = \Ad(k)^{T}b$ for all $k \in K$. Similarly, an $m \times m$ real--valued matrix $C$ is Ad$(K)$--invariant if $C = \Ad(k)^{T}C\Ad(k)$ for all $k \in K$. If we require that $\{X_{1}, \ldots, X_{m}\}$ is an orthonormal basis for $\p$, then the matrix $[\Ad(k)]$ is unitary, and we may drop the transposes in the definition of Ad$(K)$--invariance for both vectors and matrices.

Let $\sigma: G \rightarrow \mbox{Diff}(M)$ be the left action given by $\sigma(g)hK = ghK$ for all $g, h \in G$. For each $g \in G$, the mapping $\phi_{g}:= d\sigma(g) \circ d\natural_{e}$ is a linear isomorphism between $\p$ and $T_{gK}(M)$. We obtain a useful family of smooth vector fields on $M$ by defining $$\widetilde{X}(gK): = \phi_{g}(X),$$
for each $g \in G, X \in \p$.

\section{The Positive Maximum Principle and the Courr\`{e}ge Theorem}

Let $E$ be a locally compact Hausdorff space and ${\mathcal C}$ be a closed subspace of $C_{0}(E)$, which is the real Banach space of all real--valued continuous functions defined on $E$, equipped with the uniform topology. We also choose ${\mathcal F}$ to be a sub--algebra of the real algebra ${\mathcal F}(E, \R)$ of all real--valued functions defined on $E$.

A linear operator $A: D_{A} \subseteq {\mathcal C} \rightarrow {\mathcal F})$ (where $D_{A}:= $Dom$(A)$) is said to satisfy the {\it positive maximum principle} on ${\mathcal C}$ (PMP for short) if $f \in D_{A}$ and $f(x_{0}) = \sup_{x \in E}f(x) \geq 0$ implies that $Af(x_{0}) \leq 0$. In our previous paper \cite{AT1}, we studied  the PMP with $E = C_{0}(G)$ and ${\mathcal F} = {\mathcal F}(G, \R)$ . In this paper we will take $E$ to be $C_{0}(M)$, or equivalently $C_{0}(G/K)$. We will be interested in a class of distributions on $M$ that we define as follows: A {\it $\p$-induced distribution} $P$ on $M$ is a real--valued linear functional defined on $C_{c}^{\infty}(M)$ such that for every compact set $H$ contained in $M$, there exists $k \in \Z_{+}, C > 0$ so that for all $f \in C_{c}^{\infty}(H)$,
\begin{equation} \label{dist1}
|Pf| \leq C \sum_{|\alpha| \leq k}||\widetilde{X}^{\alpha}f||_{\infty},
\end{equation}
where, as in \cite{AT1} the sum on the right hand side of (\ref{dist1}) is a convenient shorthand for
$$||f||_{\infty} + \sum_{i=1}^m ||\widetilde{X}_i f||_{\infty} + \dots + \sum_{i_1, i_2, \dots, i_k = 1}^m ||\widetilde{X}_{i_1}\widetilde{X}_{i_2} \cdots \widetilde{X}_{i_k} f||_{\infty}.$$
We say that $P$ is of {\it order $k$} if the same $k$ may be used in (\ref{dist1}) for all compact $H \subseteq M$. The set of all $\p$-induced distributions on $M$ is in one--to--one correspondence with a class of distributions on $C^{\infty}(G/K)$ which are defined exactly as above but with each $\widetilde{X}$ replaced by $X$. We will call these $\p$-induced distributions on $G$.  These are clearly very closely related to the class of distributions on $G$ studied in \cite{AT1}.

We say that a linear functional $T:C_{c}^{\infty}(G/K) \rightarrow \R$ satisfies the positive maximum principle (PMP) if $f \in C_{c}^{\infty}(G/K)$ with $f(e) = \sup_{g \in G}f(g) \geq 0$ then $Tf \leq 0$. Similarly to Theorem 3.3 in \cite{AT1}, but also making use of (\ref{Kzero}), we can show that any such linear functional satisfying the PMP is a $p$--induced distribution of order $2$.

In this paper a $K$--right--invariant L\'{e}vy measure $\mu$ on $G$ will be a $K$--right--invariant Borel measure $\mu$ such that

$$ \mu(\{e\}) = 0, \nu(U^{c}) < \infty~\mbox{and}~\int_{U}\sum_{i=1}^{m}x_{i}(g)^{2}\mu(dg) < \infty,$$
for every canonical co--ordinate neighbourhood $U$ of $e$. Such measures are in one--to--one correspondence with L\'{e}vy measures on $M$ as defined in \cite{LW} and \cite{Liao} p.42.

\begin{theorem} \label{PMPdist} Let $T:C_c^\infty(G/K)\to \R$ be a linear functional satisfying the positive maximum principle. Then there exists $c \geq 0, b = (b_{1}, \ldots, b_{m}) \in \Rm$, a non--negative definite symmetric $m \times m$ real--valued matrix $a = (a_{ij})$, and a $K$--right--invariant L\'{e}vy measure $\mu$ on $G$ such that
	
	\begin{eqnarray} \label{distrform}
	Tf & = & \sum_{i,j =1}^m a_{ij}X_iX_j f(e) + \sum_{i=1}^m b_i X_i f(e) - cf(e) \nonumber \\ & + & \int_{G} \left(f(g)-f(e) - \sum_{i=1}^m x_i(g) X_if(e)\right) \mu(dg),
	\end{eqnarray}
	for all $f \in C_c^\infty(G/K)$.
Conversely any linear functional from $C_c^\infty(G/K)$ to $\R$ which takes the form (\ref{distrform}) satisfies the positive maximum principle.
\end{theorem}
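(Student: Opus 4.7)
My plan is to mirror the proof of Theorem 3.3 of \cite{AT1}, exploiting the preliminary observation (noted immediately before the statement) that any such $T$ is a $\p$-induced distribution of order $2$. I would dispose of the converse direction first as a direct verification: for $f \in C_{c}^{\infty}(G/K)$ with $f(e) = \sup_{g} f(g) \geq 0$, the first and second derivative tests on $G$ give $X_{i} f(e) = 0$ and render the bilinear form $(X,Y) \mapsto XY f(e)$ negative semidefinite on $\p$, whence $\sum_{i} b_{i} X_{i} f(e) = 0$, $\sum_{i,j} a_{ij} X_{i} X_{j} f(e) \leq 0$ (the trace of a nonnegative definite matrix times a negative semidefinite one), and $-c f(e) \leq 0$. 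In the integral, the drift correction vanishes at $e$, the integrand reduces pointwise to $f(g) - f(e) \leq 0$, and the L\'{e}vy conditions on $\mu$ combined with a second order Taylor expansion of $f$ viewed on $G/K$ near $o$ deliver the required $\mu$-integrability.

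For the direct implication, I first extract the non-local part. For $f \in C_{c}^{\infty}(G/K)$ with $\supp f \subset G \setminus \{e\}$ one has $f(e) = 0$; applying the PMP to $-f$ when $f \geq 0$ yields $T f \geq 0$, and combined with the continuity estimate (\ref{dist1}) the Riesz representation theorem produces a Radon measure $\mu$ on $G \setminus \{e\}$ representing $T$ on such test functions. Since $T$ is defined only on right $K$-invariant test functions and the map $f \mapsto f(\cdot k)$ preserves both the domain and the value of $T$, uniqueness of the representation on this subspace, together with a standard extension, forces $\mu$ to be $K$-right-invariant. The L\'{e}vy integrability $\int_{U} \sum_{i=1}^{m} x_{i}(g)^{2}\, \mu(dg) < \infty$ is then obtained by testing $T$ against a $K$-right-invariant function agreeing with $\sum_{i=1}^{m} x_{i}^{2}$ near $e$ and again invoking (\ref{dist1}).

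Once $\mu$ is in hand, the residual functional $T_{0} f := Tf - \int_{G} \bigl(f(g) - f(e) - \sum_{i=1}^{m} x_{i}(g) X_{i} f(e)\bigr) \mu(dg)$ is a $\p$-induced distribution of order at most $2$ supported at the single point $e$; by the standard classification of point-supported distributions it is a linear combination of $f(e)$, the first derivatives $X_{i} f(e)$, and the second derivatives $X_{i} X_{j} f(e)$. Identity (\ref{Kzero}) confines the first- and second-order coefficients to indices in $\{1,\ldots,m\}$, delivering the vector $b \in \Rm$ and an $m \times m$ matrix $a$ which we may symmetrise without loss. The sign constraints $c \geq 0$ and $a$ nonnegative definite are read off by feeding $T$ explicit test functions peaked at $e$ with prescribed quadratic behaviour and invoking the PMP. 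I expect the main obstacle to be the proof of the $K$-right-invariance of $\mu$: one must check that the Radon measure produced by Riesz representation on the subspace of $K$-right-invariant test functions extends uniquely and $K$-invariantly to all of $C_{c}(G \setminus \{e\})$, and this is where the argument genuinely departs from \cite{AT1} and where the relations (\ref{coAd}) and (\ref{Adcalc}) enter the picture.
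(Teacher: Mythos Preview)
Your plan follows the same strategy as the paper, which also proceeds by adapting Theorems 3.4 and 3.5 of \cite{AT1} (not 3.3) and replacing $\sum_{i=1}^{d}x_{i}^{2}$ by $\sum_{i=1}^{m}x_{i}^{2}$. The converse direction and the extraction of the local data $(a,b,c)$ go essentially as you describe.

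The one place where you make life harder than the paper does is the construction of $\mu$ and its $K$-right-invariance. The paper does not apply Riesz on a subspace of $C_{c}(G\setminus\{e\})$ and then worry about extending to all of $C_{c}(G\setminus\{e\})$. Instead it passes through the correspondence between $\p$-induced distributions on $G$ and on $M$: the positive order-zero functional is viewed directly on $C_{c}^{\infty}(M\setminus\{o\})$, Riesz is applied \emph{there} to produce a unique Radon measure on $M\setminus\{o\}$, and this is then identified with a $K$-right-invariant measure on $G$ via the standard bijection between measures on $M$ and $K$-right-invariant measures on $G$. The $K$-right-invariance is therefore built into the construction and needs no separate verification. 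Your sentence ``uniqueness of the representation on this subspace \ldots\ forces $\mu$ to be $K$-right-invariant'' is not quite right as stated: a positive functional on a proper subspace of $C_{c}(G\setminus\{e\})$ does not pin down a unique measure on $G$, so nothing is forced; one is simply \emph{choosing} the canonical $K$-invariant lift from $M$, which is precisely the paper's move. Also, the relations (\ref{coAd}) and (\ref{Adcalc}) are not used in this theorem at all; they enter only in Theorem \ref{PMP1}, where the transformation of the \emph{variable} coefficients under $K$-translation is analysed.
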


\begin{proof} This is established along similar lines to Theorems 3.4 and 3.5 in \cite{AT1}. Note that the role of the function $\sum_{i=1}^{d}x_{i}^{2}$ in that paper is now played by $\sum_{i=1}^{m}x_{i}^{2}$.  Also we exploit the correspondence between $\p$--induced distributions on $M$ and on $G$ to first associate $\mu$ to a $\p$--induced distribution of order zero on $M$ using the Riesz representation theorem in $C_{c}^{\infty}(M)$, and then identify it with a $K$--right--invariant measure on $G$.
\end{proof}

By a $K$--right--invariant L\'{e}vy kernel, we will mean a mapping $\mu: G \times{\cal B}(G) \rightarrow [0, \infty]$ which is such that for each $g \in G, \mu(g, \cdot)$ is a $K$--right--invariant L\'{e}vy measure.

\begin{theorem} \label{PMP1}

\begin{enumerate} \item The mapping $A: C_{c}^{\infty}(G/K) \rightarrow {\mathcal F}(G)$ satisfies the PMP if and only if there exist functions $c, b_{i}, a_{jk}~(1 \leq i,j,k \leq m)$ from $G$ to $\R$, wherein $c$ is non--negative, and the matrix $a(\sigma): = (a_{jk}(\sigma))$ is non--negative definite and symmetric for all $\sigma \in G$, and a $K$--right--invariant L\'{e}vy kernel $\mu$, such that for all $f \in C_{c}^{\infty}(G/K), \sigma \in G$,

\begin{eqnarray} \label{PMP2}
Af(\sigma) & = & -c(\sigma)f(\sigma) + \sum_{i=1}^{m}b_{i}(\sigma)X_{i}f(\sigma) + \sum_{j,k = 1}^{m}a_{jk}(\sigma)X_{j}X_{k}f(\sigma) \nonumber \\
& + & \int_{G}\left(f(\sigma \tau) - f(\sigma) - \sum_{i=1}^{m}x_{i}(\tau)X_{i}f(\sigma)\right)\mu(\sigma, d\tau).
\end{eqnarray}

\item The mapping $A: C_{c}^{\infty}(G/K) \rightarrow {\mathcal F}(G/K)$ satisfies the PMP if and only if there exist $c, b, a , \mu$ as in (1) such that (\ref{PMP2}) holds, and we also have that for all $g \in G, k \in K$,

    \begin{enumerate} \item[(I)] $c(gk) = c(g),$
    \item[(II)] $b(g) = [Ad(k)]^{T}b(gk)$,
     \item[(III)] $ a(g) = [Ad(k)]^{T}a(gk)[Ad(k)],$
     \item[(IV)] $\mu(gk, B) = \mu(g, kB)$, for all $B \in {\mathcal B}(G)$.
     \end{enumerate}

\item The mapping $A: C_{c}^{\infty}(K \backslash G/K) \rightarrow {\mathcal F}(K \backslash G/K)$ satisfies the PMP if and only if there exist $c, b, a , \mu$ as in (1) such that (\ref{PMP2}) holds,  and we also have that for all $g \in G, k, k^{\prime} \in K$,

    \begin{enumerate} \item[(V)] $c(kgk^{\prime}) = c(g),$
    \item[(VI)] $b(g) = b(kgk^{\prime})$ and $b(g)$ is Ad$(K)$--invariant,
     \item[(VII)] $a(g) = a(kgk^{\prime})$ and $a(g)$ is Ad$(K)$--invariant,
     \item[(VIII)] $\mu(gk^{\prime}, B) = \mu(kg, k^{\prime}B)$, for all $B \in {\mathcal B}(G)$.
     \end{enumerate}

     \end{enumerate}

\end{theorem}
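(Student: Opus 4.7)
The plan is to reduce each part to Theorem \ref{PMPdist} by localizing $A$ at each point of $G$, and then to impose the invariance conditions on the resulting Courr\`{e}ge data.

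For part (1), I would fix $\sigma \in G$ and define $T_\sigma : C_c^\infty(G/K) \to \R$ by
\[
T_\sigma f := A(f \circ \ell_{\sigma^{-1}})(\sigma),
\]
where $\ell_{\sigma^{-1}}(g) = \sigma^{-1}g$. Since left translation preserves right-$K$-invariance and compact support, and since $f(e) = \sup_g f(g) \geq 0$ forces $(f \circ \ell_{\sigma^{-1}})(\sigma) = f(e)$ to be the non-negative supremum of $f \circ \ell_{\sigma^{-1}}$, the PMP for $A$ gives $T_\sigma f \leq 0$. Theorem \ref{PMPdist} then supplies coefficients $c(\sigma), b(\sigma), a(\sigma), \mu(\sigma,\cdot)$. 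Using left-invariance of the $X_i$, the identity $Ah(\sigma) = T_\sigma(h \circ \ell_\sigma)$ unpacks to (\ref{PMP2}). The converse is immediate: at a non-negative maximum $\sigma$ of $h$, the first-order term vanishes, the second-order term is non-positive by positive semi-definiteness of $a(\sigma)$, and the jump integrand collapses to $h(\sigma\tau) - h(\sigma) \leq 0$.

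For part (2), I would equate (\ref{PMP2}) at $gk$ and at $g$: using (\ref{Adcalc}) to transform the drift and diffusion terms via $X_i h(gk) = (\Ad(k)X_i) h(g)$ and $X_iX_jh(gk) = (\Ad(k)X_i)(\Ad(k)X_j)h(g)$, and substituting $\tau \mapsto k^{-1}\tau$ in the jump integral while invoking (\ref{coAd}) to align the compensator, uniqueness of the Courr\`{e}ge data at each point forces (I)--(IV). For part (3), the additional left-$K$-invariance $Af(kg) = Af(g)$ is handled by noting that for bi-$K$-invariant $f$ one has $f(kg\tau) = f(g\tau)$ and $X_if(kg) = X_if(g)$ (since $\ell_k$ commutes with $X_i$ on left-$K$-invariant functions), so equating (\ref{PMP2}) at $kg$ and $g$ forces $c, b, a, \mu(\cdot,B)$ to be left-$K$-invariant in their first argument; together with (I)--(IV) this gives the functional content of (V)--(VIII). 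The pointwise $\Ad(K)$-invariance of $b(g)$ and $a(g)$ required by (VI), (VII) arises from a symmetrization: applying (\ref{Adcalc1}) to the identity $f = f \circ c_k$ (valid for bi-$K$-invariant $f$) shows that the gradient $(X_if(g))_i$ and the Hessian $(X_iX_jf(g))$ are themselves $\Ad(K)$-invariant at every $g$, so only the $\Ad(K)$-invariant components of $b(g), a(g)$ actually act on $f$; averaging $b, a$ over $K$ produces equivalent coefficients that do satisfy (VI), (VII), and an analogous symmetrization of $\mu(g,\cdot)$ yields (VIII). The converses in (2) and (3) are routine direct verifications.

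The hardest step will be the jump-integral bookkeeping in (2) and (3): combining the change of variables on $G$, the covariance identity (\ref{coAd}), and --- crucially in (3) --- the weaker uniqueness one gets when only bi-$K$-invariant test functions are available to probe the L\'evy kernel, which is exactly why the symmetrization step is needed to produce the canonical $\Ad(K)$-invariant form stated in (VI)--(VIII).
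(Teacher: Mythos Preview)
Your handling of parts (1) and (2) is essentially the paper's own argument: localize via $T_\sigma f = A(f\circ\ell_{\sigma^{-1}})(\sigma)$, invoke Theorem \ref{PMPdist}, and then use (\ref{Adcalc}), (\ref{coAd}) together with uniqueness of the Courr\`{e}ge data to extract (I)--(IV).

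For part (3) there is a genuine gap. You proceed as if (\ref{PMP2}) is already available and only the extra constraints (V)--(VIII) remain, but in part (3) the operator $A$ is defined \emph{only} on $C_c^\infty(K\backslash G/K)$, not on all of $C_c^\infty(G/K)$. Your localization from part (1) therefore breaks: for bi-$K$-invariant $f$ the translate $f\circ\ell_{\sigma^{-1}}$ is right-$K$-invariant but in general not left-$K$-invariant, so $A(f\circ\ell_{\sigma^{-1}})$ is undefined and $T_\sigma$ cannot be formed. The paper deals with this by first re-proving the analogue of Theorem \ref{PMPdist} for linear functionals on $C_c^\infty(K\backslash G/K)$, identifying this space with $C_c^\infty(N)$ where $N = K\backslash G/K$ is the (locally compact Hausdorff) double coset space, so that the Riesz representation step can be carried out there. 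Only after that does the paper obtain (\ref{PMP2}) and derive (V)--(VIII). Your proposal never supplies this replacement for the localization step, and the ``weaker uniqueness'' you flag at the end does not repair it: the problem is existence of the representation, not just uniqueness.

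There is also a smaller methodological difference in how the $\Ad(K)$-invariance of $b(g)$ and $a(g)$ is obtained. You argue by symmetrization: since (\ref{Adcalc1}) applied to $f = f\circ c_k$ makes the gradient and Hessian of a bi-invariant $f$ themselves $\Ad(K)$-invariant, only the invariant parts of $b,a$ act, and one may average over $K$ to obtain equivalent coefficients satisfying (VI), (VII). The paper instead applies (\ref{Adcalc1}) at $g=e$ to deduce $\Ad(K)$-invariance of $b(e), a(e)$ directly, and then observes that the construction of part (1) transports this invariance to every $g$. Both routes are valid; yours produces \emph{some} invariant data giving the same operator, while the paper shows the particular data built in part (1) is already invariant.
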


\begin{proof} \begin{enumerate}

\item This is proved by defining the linear functional $Af(e)$ and using the result of Theorem \ref{PMPdist} together with left translation, just as in the proof of Theorem 3.6 in \cite{AT1}.

\item This follows from (1) using the fact that we now have $R_{k}Af = Af$ for all $k \in K$, and then applying (\ref{coAd}) and (\ref{Adcalc}) and using uniqueness of $c, b, a$ and $\mu$.

\item We first establish the analogue of Theorem \ref{PMPdist} for real linear functionals defined on $C^{\infty}_{c}(K \backslash G /K)$. The proof is as before, but when we use the Riesz representation theorem, we must identify $C^{\infty}_{c}(K \backslash G /K)$ with $C_{c}^{\infty}(N)$, where $N:=\{KgK; g \in G\}$ is the double--coset space, which is known (and easily shown) to be locally compact and Hausdorff. (V) and (VIII) follow from (1) and (2) and uniqueness of $c, b, a$ and $\mu$, using the fact that in addition to $R_{k^{\prime}}Af = Af$ for all $k^{\prime} \in K$ we also have $L_{k}Af = Af$ for all $k \in K$.
The latter, when combined with (II) and (III) also yield $b(g) = [Ad(k^{\prime})]^{T}b(kgk^{\prime})$ and $a(g) = [Ad(k^{\prime})]^{T}a(kgk^{\prime})[Ad(k^{\prime})],$ for all  $g \in G, k, k^{\prime} \in K$. However since $f \circ c_{k} = f$, for all $k \in K$, we may apply (\ref{Adcalc1}) at $g = e$ to obtain $b(e) = [Ad(k)]^{T}b(e)$ and $a(e) = [Ad(k)]^{T}a(e)[Ad(k)]$. By the construction of Theorem 3.6 in \cite{AT1}, we then find that $b(g) = [Ad(k)]^{T}b(g)$ and $a(g) = [Ad(k)]^{T}a(g)[Ad(k)]$ for all $g \in G$. Combining these with the identities obtained earlier in the proof yield the $K$--bi--invariance in (VI) and (VII).

\end{enumerate}

\end{proof}

Remarks \begin{enumerate} \item In each of the three cases dealt with in Theorem \ref{PMP1}, we can ensure that the range of $A$ is in an appropriate space of continuous functions that vanish at infinity by imposing the conditions of Theorems 3.7 and 3.8 in \cite{AT1}.

\item We may also reformulate each part of Theorem \ref{PMP1} directly in $C_{0}(M)$ by replacing $X_{i}$ with $\widetilde{X}_{i}, i = 1, \ldots, m$. The role of Ad is then played by the isotropy representation of $K$ in $T_{o}(M)$, and in (3) we must introduce the space of functions on $M$ that are $\sigma(K)$--invariant.

\end{enumerate}

Now consider a diffusion operator $B: C_{c}^{\infty}(K \backslash G/K) \rightarrow {\mathcal F}(K \backslash G/K)$ which satisfies the positive maximum principle and takes the form

\begin{equation} \label{diff}
Bf(g) = \sum_{i=1}^{m}b_{i}(\sigma)X_{i}f(g) + \sum_{j,k = 1}^{m}a_{jk}(g)X_{j}X_{k}f(g),
\end{equation}
for each $f \in C_{c}^{\infty}(K \backslash G/K), g \in G$. Then conditions (VI) and (VII) of Theorem \ref{PMP1} (3) hold.

We say that the {\it standard irreducibility conditions} hold for the pair $(K, \p)$ if
\begin{enumerate}
\item $\{X_{1}, \ldots, X_{m}\}$ is an orthonormal basis for $\p$.
\item  $\Ad(K)$ acts irreducibly on $\p$.
\item dim$(M) > 1$.
\end{enumerate}

The following result is well--known when the vector--valued function $b$ and matrix--valued function $a$ is constant (see e.g. Proposition 3.2 in \cite{LiaoN}, p.77).

\begin{theorem} \label{reduce}
Let $B: C_{c}^{\infty}(K \backslash G/K) \rightarrow {\mathcal F}(K \backslash G/K)$ be a diffusion operator of the form (\ref{diff}) and assume that the standard irreducibility conditions hold. Then for each $f \in C_{c}^{\infty}(K \backslash G/K), g \in G$.
$$ Bf(g) = \alpha(g) \Delta f(g),$$
where $\alpha$ is a $K$--bi--invariant mapping from $G$ to $[0, \infty)$ and $\Delta = \sum_{i=1}^{m}X_{i}^{2}$ is the ``horizontal Laplacian''.
\end{theorem}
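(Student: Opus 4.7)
The plan is to exploit the symmetry conditions (VI) and (VII) from part (3) of Theorem \ref{PMP1} together with the irreducibility of the $\Ad(K)$--action on $\p$, and conclude via a Schur--type argument that the coefficients $b_i(g)$ and $a_{jk}(g)$ are highly constrained, so much so that the operator collapses to a scalar multiple of the horizontal Laplacian.

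First, I would fix $g \in G$ and focus on what (VI) and (VII) say. Since $B$ satisfies the PMP and has the form (\ref{diff}), these conditions tell us that for every $g \in G$, the vector $b(g) \in \Rm$ is $\Ad(K)$--invariant and the symmetric non--negative definite matrix $a(g)$ is $\Ad(K)$--invariant. Because we have assumed an orthonormal basis for $\p$, the matrix $[\Ad(k)]$ is orthogonal, so we may drop transposes: $\Ad(k) b(g) = b(g)$ and $a(g)\Ad(k) = \Ad(k) a(g)$ for all $k \in K$. That is, $b(g)$ lies in the fixed subspace of the representation $\Ad|_K$ on $\p$, and $a(g)$ lies in the commutant of this representation, restricted to symmetric matrices.

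Next I would invoke the standard irreducibility conditions. The fixed--vector subspace of $\Ad(K)$ acting on $\p$ is an $\Ad(K)$--invariant subspace; by irreducibility it is either $\{0\}$ or all of $\p$. If it were all of $\p$, then $\Ad(k)$ would act as the identity on $\p$, so every subspace of $\p$ would be invariant, forcing $\dim \p \leq 1$; this contradicts $\dim M > 1$. Hence $b(g) = 0$. For the matrix $a(g)$, the real Schur lemma says that the real commutant of an irreducible real representation is one of the division algebras $\R, \C, \H$. In each of these, the identity is the only element (up to real scalar) that is symmetric as an endomorphism of $\p$ with its chosen inner product (the generators of the $\C$ and $\H$ parts act as skew--symmetric matrices because they square to $-I$). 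Therefore $a(g) = \alpha(g) I_m$ for some real scalar $\alpha(g)$, and non--negative definiteness of $a(g)$ forces $\alpha(g) \geq 0$.

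Putting this together, for every $f \in C_c^\infty(K\backslash G/K)$ and $g \in G$,
$$Bf(g) = \sum_{j,k=1}^{m} a_{jk}(g) X_j X_k f(g) = \alpha(g) \sum_{i=1}^{m} X_i^2 f(g) = \alpha(g) \Delta f(g).$$
Finally, the $K$--bi--invariance of $\alpha$ is immediate from condition (VII): $a(kgk') = a(g)$ gives $\alpha(kgk') I_m = \alpha(g) I_m$, so $\alpha(kgk') = \alpha(g)$ for all $k, k' \in K$. The main obstacle is the Schur--lemma step in the symmetric matrix setting; one has to be careful that the real commutant, though possibly larger than $\R$, contributes no symmetric elements beyond scalar multiples of the identity, which requires the orthonormality of the chosen basis to ensure that $[\Ad(k)]$ is orthogonal. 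Everything else is a direct consequence of the already--established symmetry constraints.
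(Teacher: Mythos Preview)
Your proof is correct and follows essentially the same route as the paper's sketch: invoke conditions (VI) and (VII), use irreducibility to kill $b(g)$, and apply a Schur argument to force $a(g)$ to be a scalar multiple of the identity. You are in fact more careful than the paper on two points the sketch glosses over---the role of the hypothesis $\dim M > 1$ in ruling out a trivial $\Ad(K)$-action, and the real (as opposed to complex) Schur lemma, where you correctly observe that even when the commutant is $\C$ or $\H$, its only \emph{symmetric} elements are real scalars.
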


\begin{proof} (Sketch) For each $k \in K$, we have
$$ [\Ad(k)] = U\Ad(k)U^{-1},$$
where $U$ is the unitary isomorphism from $\p$ to $\R^{m}$ which maps $X_{i}$ to $e_{i}$ for $i = 1, \ldots m$, where $\{e_{1}, \ldots, e_{m}\}$ is the natural basis for $\R^{m}$. It is straightforward to verify that $k \rightarrow [\Ad(k)]$ is an irreducible unitary representation of $K$ on $\R^{m}$. But from (VII) we have that for each $g \in G, k \in K$,
$$ [\Ad(k)]a(g) = a(g)[\Ad(k)],$$
and so $a(g) = \alpha(g)I_{d}$, by Schur's lemma. Since $b(g) \in \p$ for all $g \in G$, then $b(g) =0$ by (VI) and the irreducibility conditions.

\end{proof}

\section{Applications to Feller Processes}

Let $(\Omega, {\mathcal F}, P)$ be a probability space wherein ${\mathcal F}$ is equipped with a filtration of sub--$\sigma$--algebras. Let $X = (X(t), t \geq 0)$ be a (homogeneous) Markov process defined on $\Omega$ and taking values in $G$. We define transition probabilities in the usual way so for each $t \geq 0, \sigma \in G, B \in {\mathcal B}(G)$,
$$ p_{t}(\sigma, B) = P(X(t) \in B|X(0) = \sigma).$$
We say that the process $X$ is {\it $K$--right--invariant} if
$$  p_{t}(\sigma k, Bk^{\prime}) = p_{t}(\sigma, B)$$
for all $t \geq 0, \sigma \in G, B \in {\mathcal B}(G), k, k^{\prime} \in K$. Define the transition operators $(T_{t}, t \geq 0)$ for the process $X$ by the prescription
\begin{equation} \label{TO}
T_{t}f(\sigma) = \E(f(X_{t})|X(0) = \sigma) = \int_{G}f(\tau)p_{t}(\sigma,d\tau),
\end{equation}
for all $f \in B_{b}(G), \sigma \in G, t \geq 0$. If $X$ is $K$--right--invariant, it follows from (\ref{TO}) that $T_{t}$ preserves the space $B_{b}(G/K)$ for all $t \geq 0$. We say that $X$ is a {\it $K$--right--invariant Feller process} if $(T_{t}, t \geq 0)$ is a $C_{0}$--semigroup\footnote{Note that we do not require $(T_{t}, t \geq 0)$ to be a $C_{0}$--semigroup on $C_{0}(G)$, and so $X$ may not be a Feller process on $G$, in the usual sense.} on the Banach space $C_{0}(G/K)$. If $A$ denotes the infinitesimal generator of $(T_{t}, t \geq 0)$, then a standard argument using (\ref{TO}) shows that $A$ satisfies the positive maximum principle (see e.g. Lemma 4.1 in \cite{AT1}). Hence if $C_{c}^{\infty}(G/K) \subseteq$ Dom$(A)$, then $A$ takes the form (\ref{PMP2})on $C_{c}^{\infty}(G/K)$ with the conditions of Theorem \ref{PMP1}(2) holding.

If $X$ is a $K$--right--invariant Markov process on $G$, the prescription $Y(t) = \natural(X(t))$ induces a Markov process $Y = (Y(t), t \geq 0)$ on $M$, and then the transition probabilities of $Y$ are given by
$$ q_{t}(\sigma K, BK) = p_{t}(\sigma, B),$$
for all $t \geq 0, \sigma \in G, B \in {\mathcal B}(G)$, where $BK:=\{gk, g \in B, k \in K\}$. The transition operators of $Y$ are defined by
$$ S_{t}f(x) = \int_{M}f(y)q_{t}(x, dy),$$
 and it is easy to see that
$$ T_{t}(f \circ \natural) = (S_{t}f) \circ \natural,$$
for all for all $f \in B_{b}(M), x \in M, t \geq 0$ (c.f. Proposition 1.16 in \cite{LiaoN}, p.16). If $X$ is a $K$--right--invariant Feller process in $G$, then $Y$ is a Feller process in $X$.

We say that a $K$--right--invariant Markov process in $G$ is {\it spherical} if its transition probabilities are also left $G$--invariant, i.e.
$$  p_{t}(g\sigma k, gBk^{\prime}) = p_{t}(\sigma, B)$$
for all $t \geq 0, g,\sigma \in G, B \in {\mathcal B}(G), k, k^{\prime} \in K$. It is not difficult to check that if $X$ is spherical Markov, then $T_{t}$ preserves the space $B_{b}(K \backslash G/K)$ for all $t \geq 0$.  In this case $Y$ is a $G$--invariant Markov process in $M$, as discussed in section 1.1 of \cite{LiaoN}. We say that a spherical Markov process is {\it spherical Feller} if $(T_{t}, t \geq 0)$ is a $C_{0}$--semigroup on the Banach space $C_{0}(K \backslash G/K)$. In this case, if $C_{c}^{\infty}(K \backslash G/K) \subseteq$ Dom$(A)$, then $A$ takes the form (\ref{PMP2}) on $C_{c}^{\infty}(K \backslash G/K)$ with the conditions of Theorem \ref{PMP1}(3) holding.

An important example of a spherical Feller process is a {\it spherical L\'{e}vy process}. This is essentially a (left) L\'{e}vy process (i.e. a process with stationary and independent increments) in $G$ having $K$--bi--invariant laws \cite{Liao}, with $X(0)$ being uniformly distributed on $K$. Define $\rho_{t}(B) = P(X(t) \in B)$ for all $t \geq 0, B \in {\mathcal B}(G)$. Then $(\rho_{t}, t \geq 0)$ is a weakly--continuous convolution semigroup of $K$--bi--invariant probability measures on $G$, wherein $\rho_{0}$ is normalised Haar measure on $K$. In this case the transition probabilities are given by $p_{t}(\sigma B) = \rho_{t}(\sigma^{-1}B)$ for each $t \geq 0, \sigma \in G,  B \in {\mathcal B}(G)$. An interesting case is obtained by assuming that the standard irreducibility conditions hold. It then follows by Theorems \ref{PMP2} (3) and \ref{reduce} (see also arguments in \cite{App1}, \cite{LW} and section 5.5 in \cite{LiaoN}) that the generator

\begin{equation} \label{Hunt}
Af(g) = a \Delta f(g) + \int_{G}\left(f(\sigma \tau) - f(\sigma) - \sum_{i=1}^{m}x_{i}(\tau)X_{i}f(\sigma)\right)\mu(d\tau)
\end{equation}

for each $f \in C_{c}^{\infty}(K \backslash G / K), g \in G$ where $a \geq 0$ and $\mu$ is a $K$--bi--invariant L\'{e}vy measure in $G$. 
Note that (\ref{Hunt}) is a special case of (\ref{PMP2}) wherein $b = c = 0$, the matrix--valued function $a$ is a constant multiple of the identity, and the L\'{e}vy kernel reduces to a L\'{e}vy measure. We refer readers to  Theorem 4.5 in \cite{AT1} for further discussion of the relation between translation invariance of the semigroup and the appearance of constant coefficients in Theorem \ref{PMP1}. We remark that the operator $\Delta$ is the lift of the Laplace--Beltrami operator $\Delta_{M}$ in $M$, in that $\Delta (f \circ \natural) = \Delta_{M}f \circ \natural$, for all $f \in C_{c}^{\infty}(M)$. Under these conditions we also have an extension of Gangolli's L\'{e}vy--Khinchine formula \cite{Gang1}, due to Liao and Wang \cite{LW}, Theorem 2 (see also Theorem 5.3 in \cite{LiaoN}, p.139). To be precise, if $\phi$ is a bounded spherical function\footnote{See e.g. \cite{Hel2} for background on spherical functions.} on $G$ then for all $t \geq 0$
\begin{equation} \label{GLK1}
\E(\phi(X(t)) = \int_{G}\phi(g)\rho_{t}(dg) = e^{-t \eta_{\phi}},
\end{equation}
where
\begin{equation} \label{GLK2}
\eta_{\phi} = c_{\phi} + \int_{G}(1 - \phi(g))\mu(dg),
\end{equation}
and $c_{\phi} \geq 0$ with $\Delta \phi = c_{\phi} \phi.$

\section{A Class of Pseudo--differential Operators in Compact Symmetric Spaces}

We begin by considering the following class of linear operators that satisfy the positive maximum principle:

\begin{eqnarray} \label{PMP5}
Af(\sigma) & = &  a(\sigma) \Delta f(\sigma) \nonumber \\
& + & \int_{G}\left(f(\sigma \tau) - f(\sigma) - \sum_{i=1}^{m}x_{i}(\tau)X_{i}f(\sigma)\right)\mu(\sigma, d\tau),
\end{eqnarray}
for all $f \in C_{c}^{\infty}(K \backslash G / K), \sigma \in G$.

Note that here $a$ is a scalar--valued function and that, if the irreducibility conditions hold, then the form of the second order part of the generator is determined by Theorem \ref{reduce}. Clearly (\ref{Hunt}) is a special case of (\ref{PMP5}), wherein the coefficients are constant.

We assume that the conditions of Theorems 3.7 and 3.8 in \cite{AT1} hold as well as those of Theorem \ref{PMP1}(3), so that $A$ maps $C_{c}^{\infty}(K \backslash G / K)$ to $C_{0}(K \backslash G / K)$. Note that in particular, the function $a$  is now required to be continuous. We further impose the first moment condition
$$ \sup_{\sigma \in G}\int_{G}||x(\tau)||_{\R^{m}}\mu(\sigma, d \tau) < \infty.$$ Then by Theorem \ref{reduce}, we have
$$\int_{G}x_{i}(\tau)\mu(\sigma, d\tau) = 0, $$
for all $i=1, \ldots, m, \sigma \in G$, and we may rewrite (\ref{PMP5}) as

\begin{equation} \label{PMP6}
Af(\sigma) = a(\sigma) \Delta f(\sigma) + \int_{G}(f(\sigma \tau) - f(\sigma))\mu(\sigma, d\tau),
\end{equation}

We call operators of the form (\ref{PMP6}) {\it Gangolli operators}, for reasons that will become clearer below.

If $\phi$ is a bounded spherical function on $G$ we have for all $g \in G$

\begin{equation} \label{eig}
A \phi(g) = - \eta_{g, \phi}\phi(g)
\end{equation}

where

\begin{equation} \label{eig1}
\eta_{g, \phi} = a(g)c_{\phi} + \int_{G}(1 - \phi(\tau))\mu(g, d\tau)
\end{equation}

and we see that (\ref{eig1}) is a natural generalisation to variable coefficients of the characteristic exponent (\ref{GLK2}) which appears in Gangolli's L\'{e}vy--Khinchine formula (\ref{GLK1}). The eigenrelation (\ref{eig1}) is easily derived using the functional equation for spherical functions
$$\int_{K}\phi(gkh)dk = \phi(g)\phi(h) $$
for all $g, h \in G$, and the fact that $\mu(\sigma, kd\tau) = \mu(\sigma, d\tau)$ for all $k \in K$ within (\ref{PMP6}).

Now let $M$ be a compact symmetric space, so that $G$ is a compact Lie group. Let $\G$ be the unitary dual of $G$, i.e. the set of all equivalence classes (with respect to unitary conjugation) of irreducible unitary representations of $G$. We denote by $V_{\pi}$ the representation space of $\pi$, so $\pi(g)$ is a unitary operator on $V_{\pi}$ for all $g \in G$. Then $V_{\pi}$ is finite--dimensional and we write $d_{\pi}:=$dim$(V_{\pi})$.  We denote by $\G_{S}$ the subset of $\G$ comprising irreducible {\it spherical representations}, so $\pi \in \G_{S}$ if and only if there exists $u \in V_{\pi}$ such that $\pi(k)u = u$ for all $k \in K$. In fact, the subspace of all such $u$ is one--dimensional, and from now on we fix $||u|| = 1$.\footnote{For background on spherical representations, see e.g. \cite{Wol}.}

Every spherical function on $G$ is bounded and positive definite and takes the form $\phi_{\pi}$, where $\pi \in \G_{S}$ and
$$ \phi_{\pi}(g) = \la u, \pi(g)u \ra$$
for all $g \in G$ (see e.g. \cite{Wol}). In particular, it follows that $|\phi_{\pi}(g)| \leq 1$.

 The theory of pseudo--differential operators on compact groups and homogeneous spaces has been developed by Ruzhansky and Turunen in \cite{RT}. As in our previous paper \cite{AT1}, we take a pragmatic approach to this concept and define these operators in what we hope will be a very straightforward manner. By the Peter--Weyl theorem in compact symmetric spaces, we can write $f \in C^{\infty}(K \backslash G /K)$ as a uniformly convergent series

\begin{equation} \label{Fours}
 f = \sum_{\pi \in \G_{S}}d_{\pi}\widehat{f}_{S}(\pi)\phi_{\pi},
 \end{equation}

where $\widehat{f}_{S}(\pi) = \int_{G}f(g)\phi_{\pi}(g)dg$ is the {\it spherical transform} of $f$. We say that a linear operator $T:C^{\infty}(K \backslash G / K) \rightarrow C(K \backslash G/K)$ is a {\it spherical pseudo--differential operator} if there is a mapping $j_{T}:G \times \G_{S} \rightarrow \C$ so that for all $f \in C^{\infty}(K \backslash G /K), \sigma \in G$,

\begin{equation} \label{PSD}
Tf(\sigma) = \sum_{\pi \in \G_{S}}d_{\pi}j_{T}(\sigma, \pi)\widehat{f}_{S}(\pi)\pi(\sigma),
\end{equation} 

We then say that the mapping $j_{T}$ is the {\it spherical symbol} of the operator $T$. Our goal for the remainder of this section is to show that a Gangolli operator of the form (\ref{PMP6}) is a spherical pseudo--differential operator with symbol $j_{T}(\sigma, \pi) = - \eta_{\sigma, \pi}$, where we have written $\eta_{\sigma, \pi}$ instead of $\eta_{\sigma, \phi_{\pi}}$. Our proof is very similar to the corresponding result in \cite{AT1}, where we studied operators on Lie groups having matrix--valued symbols. There are a few places where some additional ideas are needed, so we give a rather concise account, with the emphasis on those points where the proof needs embellishing.

To follow the procedure of section 5 in \cite{AT1}, we will exploit the one--to--one correspondence between $\G$ and the set of dominant weights on $\g$, which is denoted ${\it D}$. In fact we will only need the spherical dominant weights $D_{S}$ which are in one--to--one correspondence with $\G_{S}$. If both $G$ and $K$ are connected, $D_{S}$ is completely described by the Cartan--Helgason theorem (see Theorem 11.4.10 in \cite{Wol}, pp.246--9) but we will not require that result herein. From now on we will write $\pi_{\lambda}$ when $\lambda$ is the weight corresponding to $\pi$, with obvious changes to other notation where indexing will be by weights rather than by representations.

We will equip $\g$ with an Ad--invariant inner product, and write the associated norm as $|\cdot|$. This induces a norm on ${\it D}$ which is denoted by the same symbol. All results that follow in this paragraph are taken from \cite{Sug} (see also Chapter 3 of \cite{AppLbk}). Writing $d_{\lambda}: = d_{\pi_{\lambda}}$, we have the useful estimates
\begin{equation} \label{est1}
d_{\lambda} \leq C_{1} |\lambda|^{M},
\end{equation}
where $C_{1} \geq 0$ and $M$ is the number of positive roots of $G$, and for all $X \in \g$,  there exists $C_{2} \geq 0$ so that
\begin{equation} \label{est2}
||d\pi_{\lambda}(X)||_{HS} \leq C |\lambda|^{\frac{M+2}{2}}|X|,
\end{equation}
where $|| \cdot||_{HS}$ is the Hilbert--Schmidt norm.
We will also need {\it Sugiura's zeta function} $\zeta: \C \rightarrow \R \cup \{\infty\}$, defined by \begin{equation} \label{Sugz}
\zeta(s) = \ds\sum_{\lambda \in {\it D}_{0}}\frac{1}{|\lambda|^{2s}},
\end{equation}
which converges whenever $2\Re(s) > r$, where $r$ is the rank of $G$ and ${\it D}_{0}:= {\it D} \setminus \{0\}$.

\begin{theorem} \label{est3} For all $\lambda \in D_{S}$, there exists $C > 0$ so that
$$ \sup_{g \in G}|\eta_{g, \lambda}| \leq C(1 + |\lambda|^{2} + |\lambda|^{\frac{M+2}{2}})$$
\end{theorem}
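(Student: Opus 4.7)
The plan is to decompose
$$\eta_{g, \lambda} = a(g)\, c_{\phi_\lambda} + \int_{G} (1 - \phi_\lambda(\tau))\, \mu(g, d\tau)$$
by (\ref{eig1}) and bound each summand uniformly in $g$. For the first summand, $a$ is continuous on the compact group $G$, hence bounded, while standard spherical analysis furnishes the Casimir-type estimate $0 \leq c_{\phi_\lambda} \leq C(1 + |\lambda|^{2})$ (the horizontal Laplacian acts on $\phi_\lambda$ as a quadratic polynomial in the weight). Consequently $\sup_{g \in G} |a(g)\, c_{\phi_\lambda}| \leq C_{1}(1 + |\lambda|^{2})$, contributing the $|\lambda|^{2}$ in the target bound.

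For the integral summand, fix a relatively compact canonical coordinate neighbourhood $U$ of $e$ and split as $\int_{U} + \int_{U^{c}}$. On $U^{c}$ we have $|1 - \phi_\lambda(\tau)| \leq 2$, and the conditions of Theorems 3.7 and 3.8 in \cite{AT1} guarantee $\sup_{g \in G}\mu(g, U^{c}) < \infty$, so this tail is dominated by an absolute constant. On $U$, writing $\tau = \exp X$ with $X = \sum_{i=1}^{d} x_{i}(\tau) X_{i}$, the fundamental theorem of calculus along $t \mapsto \exp(tX)$, together with (\ref{Kzero}) (which kills the $\fk$-directions on the $K$-right-invariant $\phi_{\lambda}$), yields
$$\phi_{\lambda}(\tau) - 1 = \sum_{i=1}^{m} x_{i}(\tau) \int_{0}^{1} (X_{i}\phi_{\lambda})(\exp(tX))\, dt.$$
Differentiating the matrix coefficient $\phi_{\lambda}(g) = \la u, \pi_{\lambda}(g) u\ra$ gives $(X_{i}\phi_{\lambda})(g) = \la u, \pi_{\lambda}(g)\, d\pi_{\lambda}(X_{i})\, u\ra$, so by Cauchy--Schwarz and unitarity of $\pi_{\lambda}$, $|X_{i}\phi_{\lambda}(g)| \leq ||d\pi_{\lambda}(X_{i})||_{\HS} \leq C_{2}|\lambda|^{(M+2)/2}$ by (\ref{est2}). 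This produces the pointwise estimate $|1 - \phi_{\lambda}(\tau)| \leq C_{3}|\lambda|^{(M+2)/2} ||x(\tau)||_{\Rm}$, and the first moment hypothesis $\sup_{g \in G}\int_{G}||x(\tau)||_{\Rm}\mu(g, d\tau) < \infty$ then delivers $\sup_{g \in G} |\int_{U}(1 - \phi_{\lambda}(\tau))\mu(g, d\tau)| \leq C_{4}|\lambda|^{(M+2)/2}$. Assembling the three estimates gives the claim.

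The main obstacle is conceptual: a second-order Taylor expansion of $\phi_{\lambda}$ at $e$---the natural first instinct, since we are working with a L\'{e}vy--Khintchine-type integrand---would force a bound of order $||d\pi_{\lambda}(X)||_{\op}^{2} \lesssim |\lambda|^{M+2}$ via (\ref{est2}), an order of magnitude too coarse. It is precisely the first moment assumption imposed earlier that authorises the first-order expansion used above, keeping the exponent at $(M+2)/2$. Similarly, the Casimir bound $c_{\phi_{\lambda}} = O(|\lambda|^{2})$ on the horizontal Laplacian is stronger than anything (\ref{est2}) yields directly and must be drawn from the representation-theoretic spectral theory of $\Delta$ on $M$ rather than re-derived from the Hilbert--Schmidt estimate.
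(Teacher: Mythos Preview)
Your proof is correct and follows essentially the same approach as the paper: both decompose $\eta_{g,\lambda}$ into the diffusion part (controlled by the Casimir bound $c_{\phi_\lambda} \leq C(1+|\lambda|^2)$) and the integral part, split the latter over $U$ and $U^c$, and on $U$ exploit a first-order Taylor expansion together with the vanishing of the $\fk$-directions to reduce to $\sum_{i=1}^{m} x_i(\tau)$ and then invoke (\ref{est2}) and the first moment condition. The only cosmetic difference is that the paper carries out the first-order expansion inside the representation space (writing $\pi_\lambda(\exp X)-I$ and applying Taylor's theorem with Lagrange remainder to the operator exponential, using $d\pi_\lambda(X)u=0$ for $X\in\fk$), whereas you phrase it as the fundamental theorem of calculus along $t\mapsto\exp(tX)$ on the group and invoke (\ref{Kzero}); these are the same computation.
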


\begin{proof} First observe that for all $\lambda \in D_{S}$ (and writing $c_{\lambda}:=c_{\phi_{\lambda}}$),
$$ c_{\lambda} \leq C_{1}(1 + |\lambda|^{2})$$
where $C_{1} > 0$ (see e.g. \cite{AppLbk} p.50).

For the integral term, we first use a Taylor series expansion and the fact that $d\pi(X)u = 0$ for all $X \in \fk$, to deduce that for all $\pi \in \G_{S}, g \in G$, there exists $0 < \theta < 1$ so that

\bean \phi_{\pi}(g) - 1 & = & \la (\pi(g) - I)u, u \ra \\
& = & \left\la \left(\pi\left(\exp{\left(\sum_{i=1}^{d}x_{i}(g)X_{i}\right)}\right) - I\right)u, u \right \ra \\
& = & \left\la \left( \exp{\left(\sum_{i=1}^{d}x_{i}(g)d\pi(X_{i})\right)} - I\right)u, u \right \ra  \\
& = & \left \la \exp{\left(\theta\sum_{i=1}^{d}x_{i}(g)d\pi(X_{i})\right)} \sum_{i=1}^{m}x_{i}(g)d\pi(X_{i})u, u \right \ra, \eean
and so
\bean |\phi_{\pi}(g) - 1| & \leq & \left|\left|\sum_{i=1}^{m}x_{i}(g)d\pi(X_{i})u\right|\right|_{V_{\pi}}\\
& \leq & \left|\left|\sum_{i=1}^{m}x_{i}(g)d\pi(X_{i})u\right|\right|_{HS}\\  \eean
where we can and do compute the Hilbert--Schmidt norm using an orthonormal basis $\{e_{1}, \ldots, e_{d_{\pi}}\}$ for $V_{\pi}$ in which $e_{1} = u$.

Using the conclusions of the last display, the first moment condition and (\ref{est2}), we deduce that there exists $C_{2} \geq 0$ so that
$$ \sup_{g \in G}\left|\int_{U}(\phi_{\lambda}(\tau) - 1)\mu(g, d\tau)\right| \leq C_{2}\max\{|X_{1}|, \ldots, |X_{m}|\}|\lambda|^{\frac{M+2}{2}}.$$

Since $|\phi_{\lambda}(g)| \leq 1$ for all $\lambda \in D_{S}, g \in G$, we easily deduce that

$$ \sup_{g \in G}\left|\int_{U^{c}}(\phi_{\lambda}(\tau) - 1)\mu(g, d\tau)\right| \leq 2\sup_{g \in G}\mu(g, U^{c}) < \infty,$$
where we use the fact that $g \rightarrow \mu(g, U^{c})$ is continuous, this being a consequence of the assumptions we've made to ensure that the range of $A$ comprises continuous functions (see Theorem 3.7 in \cite{AT1}, and the remark that follows its proof). The result follows.
\end{proof}

\begin{prop} \label{absun}
The series $\sum_{\lambda \in D_{S}}d_{\lambda}\eta_{g, \lambda}\widehat{f_{S}}(\lambda)\phi_{\lambda}(g)$ converges absolutely and uniformly (in $g \in G$), for all $f \in C^{\infty}(K \backslash G/K)$.
\end{prop}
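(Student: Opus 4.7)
The plan is to produce a dominating series that is independent of $g \in G$ and summable over $D_{S}$, which will simultaneously give absolute and uniform convergence. The dominating bound will combine the polynomial growth estimates from Theorem \ref{est3} and \eqref{est1} with rapid polynomial decay of the spherical Fourier coefficients $\widehat{f_{S}}(\lambda)$ coming from the smoothness of $f$, and the final summation will be controlled by Sugiura's zeta function \eqref{Sugz}.

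For the first step, combining \eqref{est1}, Theorem \ref{est3} and the uniform bound $|\phi_{\lambda}(g)| \leq 1$, I would show that the $\lambda$--th term satisfies
$$ d_{\lambda} \, |\eta_{g,\lambda}| \, |\widehat{f_{S}}(\lambda)| \, |\phi_{\lambda}(g)| \leq C \bigl(1 + |\lambda|^{M + s(M)}\bigr) \, |\widehat{f_{S}}(\lambda)|, $$
uniformly in $g \in G$, where $s(M) := \max\bigl(2, (M+2)/2\bigr)$. Thus the problem reduces to showing that $|\widehat{f_{S}}(\lambda)|$ decays faster than any inverse polynomial in $|\lambda|$.

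For the second step, I would use that $\Delta \phi_{\lambda} = c_{\lambda} \phi_{\lambda}$ for every $\lambda \in D_{S}$. Integration by parts (applied twice to each $X_{i}$, using left--invariance and compactness of $G$) gives that $\Delta$ is symmetric on smooth functions against Haar measure, so $\widehat{\Delta^{n} f}_{S}(\lambda) = c_{\lambda}^{n} \, \widehat{f_{S}}(\lambda)$ for every $n \in \N$. Since $|\phi_{\lambda}| \leq 1$ and Haar measure is normalised, this yields $c_{\lambda}^{n} \, |\widehat{f_{S}}(\lambda)| \leq \|\Delta^{n} f\|_{\infty} < \infty$. Combined with the Casimir--type lower bound $c_{\lambda} \geq c_{0} |\lambda|^{2}$ for $\lambda \in D_{S}$ with $|\lambda|$ sufficiently large (the natural complement to the upper bound $c_{\lambda} \leq C_{1}(1 + |\lambda|^{2})$ used in Theorem \ref{est3}), one obtains $|\widehat{f_{S}}(\lambda)| \leq C_{n} \, |\lambda|^{-2n}$ for every $n$. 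Plugging this into the first step, the general term is bounded uniformly in $g$ by $C_{n}'(1 + |\lambda|^{M + s(M) - 2n})$. Choosing $n$ so that $2n - M - s(M) > r$, where $r$ is the rank of $G$, Sugiura's zeta function \eqref{Sugz} guarantees convergence of $\sum_{\lambda \in D_{S} \setminus \{0\}} |\lambda|^{-(2n - M - s(M))}$; the contribution from $\lambda = 0$ is a single bounded term. This closes the argument.

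The main obstacle I anticipate is rigorously justifying the lower bound $c_{\lambda} \geq c_{0}|\lambda|^{2}$, which is the key input driving the rapid--decay conclusion. While the upper bound appears in Theorem \ref{est3}, the lower bound requires the explicit formula $c_{\lambda} = \langle \lambda, \lambda + 2\rho \rangle$ (with $\rho$ half the sum of the positive restricted roots) together with the fact that $\langle \lambda, \rho\rangle \geq 0$ on the dominant chamber. An alternative, which sidesteps spectral details of $\Delta$, is to invoke directly the extension to the spherical setting of Sugiura's theorem from \cite{Sug} stating that the spherical Fourier coefficients of a smooth $K$--bi--invariant function are rapidly decreasing.
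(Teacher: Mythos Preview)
Your proposal is correct and follows essentially the same strategy as the paper: bound the general term uniformly in $g$ via \eqref{est1}, Theorem~\ref{est3}, and $|\phi_{\lambda}|\leq 1$, then exploit rapid decay of $\widehat{f_{S}}(\lambda)$ together with Sugiura's zeta function \eqref{Sugz}. The only difference is in how the rapid decay is obtained: the paper simply quotes Sugiura's result that $\lim_{|\lambda|\to\infty}|\lambda|^{p}|\widehat{f_{S}}(\lambda)|=0$ for every $p\in\N$ (your ``alternative'' at the end is exactly this), whereas your main argument re-derives it by integrating $\Delta^{n}$ by parts and invoking the Casimir lower bound $c_{\lambda}\geq|\lambda|^{2}$. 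Your route is more self-contained but needs the extra spectral ingredient you flagged; the paper's route is shorter because it outsources that step to \cite{Sug}.
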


\begin{proof} It follows from the properties of Fourier transforms on compact Lie groups that for all $p \in {\mathbb N}$
$$ \lim_{|\lambda| \rightarrow \infty}|\lambda|^{p}|\widehat{f_{S}}(\lambda) = 0,$$
(see e.g. \cite{Sug} or \cite{AppLbk}, p.78). Hence using (\ref{est2}),  and Theorem \ref{est3}, we see that for any $p \in \mathbb{N}$, there exists $\lambda_{0} \in D_{S}\ \setminus\{0\}$ so that there exists $C > 0$ with
$$ \sup_{g \in G}\sum_{|\lambda| > |\lambda_{0}}d_{\lambda}|\eta_{g, \lambda}\widehat{f_{S}}(\lambda)\phi_{\lambda}(g)| \leq C \sum_{|\lambda| > |\lambda_{0}}\frac{|\lambda|^{M}(1 + |\lambda|^{2} + |\lambda|^{\frac{M+2}{2}})}{|\lambda|^{p}}.$$
Now choose $p > \frac{3(M+1)}{2} + r$ and the result follows from (\ref{Sugz}).
\end{proof}

\begin{theorem} $A$ is a pseudo--differential operator of the type (\ref{PSD}) with symbol $j_{A}(g, \lambda) = -\eta_{g, \lambda}$ for all $g \in G, \lambda \in D_{S}$.
\end{theorem}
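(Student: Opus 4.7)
The plan is to apply $A$ term by term to the spherical Peter--Weyl expansion of $f$, using the eigenrelation (\ref{eig}), and to justify the interchange by $C^{\infty}$-convergence on the $A$-side and by the absolute uniform convergence supplied by Proposition~\ref{absun} on the symbol side.

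First, fix $f \in C^{\infty}(K\backslash G/K)$ (which coincides with $C_c^\infty(K\backslash G/K)$ since $G$ is compact) and form the partial sums
$$ f_N(\sigma) := \sum_{\lambda \in D_S,\, |\lambda|\leq N} d_\lambda \widehat{f}_S(\lambda)\phi_\lambda(\sigma). $$
The rapid decay of $\widehat{f}_S$ on $D_S$ combined with the polynomial bounds (\ref{est1}) and (\ref{est2}) (applied iteratively as in the proof of Proposition~\ref{absun}) shows that $f_N \to f$ not merely uniformly but in the $C^\infty$ topology, i.e. $X^\alpha f_N \to X^\alpha f$ uniformly on $G$ for every multi-index $\alpha$ in the basis $\{X_1,\dots,X_m\}$ of $\p$. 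By linearity and (\ref{eig}),
$$ Af_N(\sigma) \;=\; -\sum_{|\lambda|\leq N} d_\lambda\, \eta_{\sigma,\lambda}\, \widehat{f}_S(\lambda)\phi_\lambda(\sigma), $$
and by Proposition~\ref{absun} the right-hand side converges uniformly in $\sigma$ to $-\sum_{\lambda \in D_S} d_\lambda \eta_{\sigma,\lambda}\widehat{f}_S(\lambda)\phi_\lambda(\sigma)$.

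It then remains to show that, in parallel, $Af_N(\sigma) \to Af(\sigma)$, and this is the step I expect to be the main obstacle. Splitting $A$ via (\ref{PMP6}), the diffusion part is straightforward: $\Delta f_N \to \Delta f$ uniformly by the $C^\infty$-convergence, and multiplication by the continuous (hence bounded) function $a$ preserves this on the compact group $G$. The nonlocal part requires more care. I would fix a canonical co-ordinate neighbourhood $U$ of $e$ and treat the two pieces separately. On $U^c$, the bound $\sup_{\sigma\in G}\mu(\sigma,U^c) < \infty$ (available from the continuity of $\sigma \mapsto \mu(\sigma,U^c)$ on the compact $G$, as invoked already in the proof of Theorem~\ref{est3}) combined with the uniform convergence $f_N \to f$ permits passage to the limit inside $\int_{U^c}(\,\cdot\,)\mu(\sigma,d\tau)$. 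On $U$, a first-order Taylor expansion gives
$$ |f_N(\sigma\tau) - f_N(\sigma)| \;\leq\; \Bigl(\sup_{i,\eta}|X_i f_N(\eta)|\Bigr) \|x(\tau)\|_{\R^m}, $$
and the $C^\infty$-convergence yields $\sup_N \sup_{i,\eta}|X_i f_N(\eta)| < \infty$, so that $C\|x(\tau)\|_{\R^m}$ is a dominating function uniform in $N$. The first moment condition $\sup_\sigma \int \|x(\tau)\|_{\R^m}\mu(\sigma,d\tau) < \infty$ then closes the dominated-convergence argument.

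Equating the two limits gives
$$ Af(\sigma) \;=\; \sum_{\lambda \in D_S} d_\lambda\,(-\eta_{\sigma,\lambda})\,\widehat{f}_S(\lambda)\,\phi_\lambda(\sigma), $$
which is exactly the representation (\ref{PSD}) with spherical symbol $j_A(\sigma,\lambda) = -\eta_{\sigma,\lambda}$, completing the identification.
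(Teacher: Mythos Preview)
Your proof is correct and follows the same overall scheme as the paper --- approximate $f$ by its spherical Fourier partial sums $f_N$, evaluate $Af_N$ via the eigenrelation (\ref{eig}), and identify the limit using Proposition~\ref{absun}. The one substantive difference lies in how you justify $Af_N \to Af$. The paper does this in a single stroke by invoking the fact (Ethier--Kurtz, Lemma~2.11) that any linear operator satisfying the positive maximum principle is closed on its maximal domain: since $f_N \to f$ uniformly and, by Proposition~\ref{absun}, $Af_N$ converges uniformly, closedness immediately forces the limit to be $Af$. You instead give a direct argument, splitting $A$ into its diffusion and jump parts via (\ref{PMP6}) and handling each with $C^{\infty}$-convergence and dominated convergence (exploiting the first moment condition and the finiteness of $\sup_{\sigma}\mu(\sigma,U^{c})$). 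Your route is more hands-on and entirely self-contained, at the cost of a little more bookkeeping; the paper's route is shorter and applies to any PMP operator without reference to its explicit form, but relies on an external closedness lemma. Both are valid; each has its merits.
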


\begin{proof} To obtain this result, we just imitate the proof of Theorem 5.5 in \cite{AT1}, i.e. utilise (\ref{Fours}) to approximate $f$ by a finite Fourier series $f_{n}$,  and then use the fact that all operators that satisfy the positive maximum principle are closed on their maximal domain (see \cite{EK} Lemma 2.11, p.16) to deduce that for all $g \in G$, $$Af(g) = \limn Af_{n}(g) = -\sum_{\lambda \in D_{S}}d_{\lambda}\eta_{g, \lambda}\widehat{f_{S}}(\lambda)\phi_{\lambda}(g).$$
\end{proof}

Now consider a matrix--valued ``symbol'' (see \cite{AT1}) of ``Gangolli--type'' given by

$$ \sigma(g, \pi) = \left\{ \begin{array}{c c} - a(g)c_{\pi}I_{\pi} + \int_{G}(\pi(\tau) - I_{\pi})\mu(g, d\tau) &~\mbox{if}~\pi \in \G_{S}\\
0 &~\mbox{if}~\pi \notin \G_{S} \end{array}\right., $$
for all $g \in G$.
Then it is easy to see that $\int_{K}\int_{K}\la u, \sigma(kgk^{\prime}, \pi)u\ra dk dk^{\prime}$ is the symbol (taking the form (\ref{eig1})) of a Gangolli operator. This is related to the discussion of ``averaging'' of symbols which can be found on p.671 of \cite{RT}; however in that case, the averaging of the symbol was only with respect to the first variable. We have not shown that $\sigma$ really is the symbol of a pseudo--differential operator; our only goal here is to show how averaging might be implemented.

Finally we remark that we expect that Gangolli operators will also be pseudo--differential operators (with scalar symbols) in the cases where $M$ is non--compact (where the constant coefficient case was discussed in \cite{App5}), and when it is of Euclidean type.

\end{document}